\title{Note on a paper ``An Extension of a Theorem of Euler'' by Hirata-Kohno et al.}
\author{Sz. Tengely}
\thanks{Research supported in part by the Magyary Zolt\'an Higher Educational Public Foundation}
\address{Mathematical Institute\newline
 \indent University of Debrecen\newline
 \indent P.O.Box 12\newline
 \indent 4010 Debrecen\newline
 \indent Hungary}
\email{tengely@math.klte.hu}
\keywords{Diophantine equations}
\subjclass[2000]{Primary 11D61, Secondary 11Y50}
\begin{document}
\newtheorem{thm}{Theorem}
\newtheorem{lem}{Lemma}
\newtheorem*{cor}{Corollary}
\newtheorem*{thm1}{Theorem}
\newtheorem*{thmA}{Theorem A}
\newtheorem*{thmB}{Theorem B}
\newtheorem*{lem1}{Lemma}
\newtheorem*{conj1}{Conjecture}
\theoremstyle{definition}
\newtheorem*{rem}{Remark}
\newtheorem*{acknowledgement}{Acknowledgement}
\bibliographystyle{plain}
\maketitle

\begin{abstract}
In this paper we extend a result of Hirata-Kohno, Laishram, Shorey and Tijdeman on the Diophantine equation 
$n(n+d)\cdots(n+(k-1)d)=by^2,$ where $n,d,k\geq 2$ and $y$ are positive integers such that $\gcd(n,d)=1.$
\end{abstract}

\section{introduction}
Let $n,d,k>2$ and $y$ be positive integers such that $\gcd(n,d) = 1.$  For an integer $\nu> 1,$ we denote by $P(\nu)$ the greatest prime factor of $\nu$ and we put $P(1) = 1.$ Let $b$ be a squarefree positive integer such that $P(b)\leq k.$ We consider the equation
\begin{equation}\label{1}
n(n + d)\cdots(n + (k-1)d) = by^2
\end{equation}
in $n,d,k$ and $y.$ 

A celebrated theorem of Erd\H{o}s and Selfridge \cite{ES} states that the product of consecutive positive integers is never a perfect power. An old, difficult conjecture states that even a product of consecutive terms of arithmetic progression of length $k>3$ and difference $d\geq 1$ is never a perfect power.
Euler proved (see \cite{Dickson} pp. 440 and 635) that a product of four terms in arithmetic progression is never a square solving equation \eqref{1} with $b=1$ and $k=4.$ Obl\'ath \cite{Oblath} obtained a similar statement for $b=1,k=5.$ Bennett, Bruin, Gy\H{o}ry and Hajdu \cite{BBGyH} solved \eqref{1} with $b=1$ and $6\leq k\leq 11.$  For more results on this topic see \cite{BBGyH}, \cite{HLST} and the references given there.

We write
\begin{equation}\label{2}
n + id = a_ix_i^2 \mbox{ for } 0\leq i<k
\end{equation}
where $a_i$ are squarefree integers such that $P(a_i)\leq \max(P(b),k-1)$ and $x_i$ are positive
integers. Every solution to \eqref{1} yields a $k$-tuple $(a_0, a_1,\ldots, a_{k-1}).$ Recently Hirata-Kohno, Laishram, Shorey and Tijdeman \cite{HLST} proved the following theorem.

\begin{thmA}[Hirata-Kohno, Laishram, Shorey,Tijdeman]
Equation \eqref{1} with $d > 1,P(b) = k$ and $7\leq k\leq 100$ implies that $(a_0,a_1,\ldots,a_{k-1})$
is among the following tuples or their mirror images.
\begin{eqnarray*}
k = 7 : && (2,3,1,5,6,7,2), (3, 1, 5, 6, 7, 2, 1), (1, 5, 6, 7, 2, 1, 10),\\
k = 13 : &&  (3, 1, 5, 6, 7, 2, 1, 10, 11, 3, 13, 14, 15),\\
&& (1, 5, 6, 7, 2, 1, 10, 11, 3, 13, 14, 15, 1),\\
k = 19 : && (1, 5, 6, 7, 2, 1, 10, 11, 3, 13, 14, 15, 1, 17, 2, 19, 5, 21, 22),\\
k = 23 : && (5, 6, 7, 2, 1, 10, 11, 3, 13, 14, 15, 1, 17, 2, 19, 5, 21, 22, 23, 6, 1, 26, 3),\\
&& (6, 7, 2, 1, 10, 11, 3, 13, 14, 15, 1, 17, 2, 19, 5, 21, 22, 23, 6, 1, 26, 3, 7).
\end{eqnarray*}
\end{thmA}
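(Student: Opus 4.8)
The plan is to combine a combinatorial sieve, which narrows the possibilities for $(a_0,\dots,a_{k-1})$ down to a short explicit list, with an explicit Diophantine resolution of the system \eqref{2} attached to each surviving candidate.

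First I would record the elementary constraints on the $a_i$. Since $\gcd(n,d)=1$ and $a_i\mid n+id$, one has $\gcd(a_i,d)=1$; and since $P(b)=k$ forces $k$ to be prime, $\max(P(b),k-1)=k$, so each $a_i$ is a squarefree product of primes $p\le k$. Fix a prime $p\le k$ with $p\nmid d$ (if $p\mid d$, then $p$ divides no term). The indices $i$ with $p\mid n+id$ form one residue class $r_p$ modulo $p$, and those with $p^2\mid n+id$ form one class modulo $p^2$; hence at most $\lceil k/p^2\rceil$ of the former are divisible by $p^2$, and for all the others $p\,\|\,(n+id)$, i.e. $p\mid a_i$. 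In particular, for every prime $p$ with $\sqrt{k-1}<p\le k$ one gets $p\mid a_i$ for \emph{all} indices $i\equiv r_p\pmod p$ with at most one exception. As $a_i$ is the product of exactly the primes $p$ with $v_p(n+id)$ odd, and all the residues $r_p$ are governed by the single quantity $-n\,d^{-1}$ modulo the small primes, this pins down the whole tuple away from a bounded set of ``exceptional'' positions, where $a_i$ can still only take a bounded number of divisor-type values. Running this sieve for each prime $k$ in $[7,100]$ -- together with the sharper small-prime bookkeeping in the tradition of Sylvester and Erd\H{o}s, and with the standard results limiting how many of the $a_i$ can be small -- yields a finite list of candidate tuples.

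Next, for a candidate tuple not appearing in Theorem A, I would prove that $n+id=a_ix_i^2$ $(0\le i<k)$ has no solution in positive integers with $\gcd(n,d)=1$. Eliminating $n$ and $d$ from the equations at three indices $i_1<i_2<i_3$ gives a ternary quadratic relation
\[
(i_3-i_2)\,a_{i_1}x_{i_1}^2-(i_3-i_1)\,a_{i_2}x_{i_2}^2+(i_2-i_1)\,a_{i_3}x_{i_3}^2=0 ;
\]
if some such conic is everywhere locally unsolvable (a Hasse--Minkowski check), the candidate dies at once. Otherwise I would parametrize the conic and feed the parametrization into a second such relation coming from a fourth index, obtaining a curve of genus $1$ (an intersection of two quadrics, or after reduction a model $v^2=q(t)$ with $q$ a quartic) -- or, using more indices, a hyperelliptic curve of genus $\ge 2$ -- and determine all of its rational points by $2$-descent together with Mordell--Weil group computations, Chabauty's method (including the elliptic-curve Chabauty variant over number fields) and the Mordell--Weil sieve. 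One then checks which of the finitely many points give integers $n,d,x_i$ meeting the positivity, coprimality and squarefreeness constraints; the tuples that survive this for \emph{all} admissible index choices are exactly the ones (and their mirror images) listed in Theorem A.

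The main obstacle is twofold. On the combinatorial side, the sieve has to be run uniformly over all primes $k\le 100$ and still produces a sizeable pool of candidates, so the bookkeeping -- especially around the exceptional positions for the small primes -- is heavy and must be organised algorithmically. On the Diophantine side, the genus-$1$ (and occasional higher-genus) curves attached to the hardest candidates need not have rank $0$: some carry nontrivial rational points, and for precisely the values $k=7,13,19,23$ a candidate survives, so one cannot dispose of everything by a local obstruction but must carry out genuine Chabauty / Mordell--Weil-sieve arguments -- with the attendant rank and saturation computations -- to be certain the list is complete.
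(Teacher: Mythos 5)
Theorem A is not proved in this paper at all: it is quoted from Hirata-Kohno, Laishram, Shorey and Tijdeman \cite{HLST}, and the paper's own contribution (Theorems \ref{k7} and \ref{k5}) begins precisely where Theorem A stops, namely with the elimination of the listed tuples by elliptic Chabauty over $\mathbb{Q}(i)$ and $\mathbb{Q}(\sqrt{3})$. So there is no internal proof to compare yours against; what can be judged is whether your sketch would constitute a proof of the quoted statement, and as it stands it would not. The heart of Theorem A is the combinatorial reduction from ``some squarefree $a_i$ with $P(a_i)\le k$'' to the short explicit list of tuples, and this is exactly the part you leave as a gesture. Your observations (each prime $p\le k$ with $p\nmid d$ hits one residue class, $p\,\|\,(n+id)$ except for at most $\lceil k/p^2\rceil$ indices, the classes are governed by $-nd^{-1}$) are correct but nowhere near sufficient: $n$ and $d$ are unknown, so the residues $r_p$ are free parameters, and ``this pins down the whole tuple away from a bounded set of exceptional positions'' is not a proof. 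The actual argument in \cite{HLST} requires delicate counting of the distinct values among the $a_i$, Erd\H{o}s--Selfridge-type estimates, quadratic-residue constraints, and rank-zero elliptic curve eliminations, run uniformly over all $k$ in the range; none of that bookkeeping is reconstructed in your sketch, and it is where the theorem's content lies. (Your ternary relation $(i_3-i_2)a_{i_1}x_{i_1}^2-(i_3-i_1)a_{i_2}x_{i_2}^2+(i_2-i_1)a_{i_3}x_{i_3}^2=0$ is a correct identity, and local solvability tests of this kind are indeed part of such sieves.)

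There is also a structural mismatch in your endgame. Theorem A does not assert that the listed tuples admit solutions, nor that all other tuples have been resolved completely; it asserts only that \cite{HLST}'s methods eliminate everything else, the listed tuples being exactly the ones (coming from the $d=1$ progressions such as $2\cdot3\cdots8$) that resisted their techniques under the hypothesis $d>1$. Your plan to determine \emph{all} rational points on the associated genus-one and higher-genus curves by Mordell--Weil computations, elliptic Chabauty and the Mordell--Weil sieve for every surviving candidate is precisely the programme carried out in the present paper (Lemmas \ref{15672110}--\ref{65132}), and if executed it proves the stronger Theorem \ref{k7} (no solutions at all), not Theorem A. So as a blind proof of the quoted statement your proposal is simultaneously incomplete where it matters (the finite-list sieve) and aimed past the target where it is detailed (the Chabauty resolution of the eight special tuples).
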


In case of $k=5$ Bennett, Bruin, Gy\H{o}ry  and Hajdu \cite{BBGyH} proved the following result.
\begin{thmB}[Bennett, Bruin, Gy\H{o}ry, Hajdu]
If $n$ and $d$ are coprime nonzero integers, then the Diophantine equation
$$
n(n+d)(n+2d)(n+3d)(n+4d)=by^2
$$
has no solutions in nonzero integers $b,y$ and $P(b)\leq 3.$
\end{thmB}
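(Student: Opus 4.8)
The plan is to follow the standard route for equations of the shape \eqref{1}: reduce the problem to finitely many admissible tuples $(a_0,a_1,a_2,a_3,a_4)$ as defined in \eqref{2}, discard as many of them as possible by elementary and local arguments, and resolve the few that survive by reducing each one to the determination of the rational points on an explicit curve of genus at least $1$.

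I would begin from the structural constraints. After the mirror substitution $(n,d)\mapsto(n+4d,-d)$ we may assume $d>0$, and a short sign analysis of $\prod_{i=0}^{4}(n+id)=by^2$ reduces us to the configuration in which all five factors are positive, the remaining configurations being entirely analogous. From $\gcd(n,d)=1$ one gets $\gcd(a_i,d)=1$ for $0\le i\le 4$, and since $P(a_i)\le 4$ (as $P(b)\le 3$ and $k-1=4$) each $a_i$ is a squarefree integer with $a_i\in\{1,2,3,6\}$. Moreover, if $a_i=a_j$ with $i<j$ then $(j-i)d=a_i(x_j^2-x_i^2)$ and $\gcd(a_i,d)=1$, so $a_i\mid j-i$; hence $a_i\in\{1,2,3\}$, the value $6$ occurs for at most one $i$, and the value $3$, if it occurs twice, does so only at the index pair $\{0,3\}$ or $\{1,4\}$. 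The value $1$ (resp.\ $2$) cannot occur at all four indices $\{0,1,2,3\}$ or $\{1,2,3,4\}$, since then four of the $n+id$ would be perfect squares (resp.\ twice perfect squares), contradicting Fermat's theorem that no four distinct squares form an arithmetic progression; and if $6\mid d$ then all five terms are coprime to $6$, hence all are squares, giving the same contradiction. Incorporating the parity pattern of the $n+id$ (governed by $n,d\bmod 2$) and the distribution of the factor $3$ (governed by $n,d\bmod 3$), these constraints cut the list of candidate tuples down to a short one.

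For each surviving tuple I would eliminate $n$ and $d$ via the linear relations among the five terms: an arithmetic triple of indices $i<j<\ell$ gives $a_ix_i^2-2a_jx_j^2+a_\ell x_\ell^2=0$, and a triple such as $\{0,1,3\}$ gives $2a_0x_0^2-3a_1x_1^2+a_3x_3^2=0$, all with small fixed integer coefficients. Two independent relations among four of the variables define a curve in $\mathbb P^3$ which, when smooth, is a genus-$1$ curve over $\mathbb Q$ (a complete intersection of two quadrics); imposing the remaining equation $n+rd=a_rx_r^2$ for the fifth index $r$ then realizes the curve we really want as a cover of that genus-$1$ curve, on which a certain quadratic function has to be $a_r$ times a square. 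Extracting the square root over the real quadratic field $\mathbb Q(\sqrt 2)$, $\mathbb Q(\sqrt 3)$ or $\mathbb Q(\sqrt 6)$ appropriate to the tuple turns this into an instance of elliptic curve Chabauty over that field; in other cases one reaches instead a hyperelliptic model $v^2=f(u)$ of genus $2$ over $\mathbb Q$, to which classical Chabauty--Coleman applies. In practice many of the curves that arise already fail to have points over $\mathbb Q_p$ for a small prime or prime power $p$, or are eliminated by one more $\bmod 3$ step which forces $3\mid\gcd(n,d)$; the genuine survivors are settled by an elliptic-Chabauty (respectively Chabauty--Coleman) computation, backed up by the Mordell--Weil sieve when the rank is too large for Chabauty alone, checking in each case that no rational point corresponds to a solution with $d\neq 0$ and $\gcd(n,d)=1$.

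The main obstacle is this last step. The elementary elimination, though it has many cases, is routine once one has the divisibility $a_i\mid j-i$ and the nonexistence of four-term arithmetic progressions of squares. What costs real effort is verifying that the elliptic curves and Jacobians attached to the surviving tuples are within reach of (elliptic) Chabauty --- exhibiting generators of their Mordell--Weil groups and confirming that the rank stays below the degree of the relevant field --- and, where that fails, replacing Chabauty by a Mordell--Weil sieve or a further descent; it is conceivable that a stubborn tuple needs an argument of its own.
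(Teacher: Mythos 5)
The statement you have attacked is Theorem B, which this paper does not prove at all: it is quoted, with attribution, from Bennett, Bruin, Gy\H{o}ry and Hajdu \cite{BBGyH}, and is only used here as background for the new case $P(b)=5$. So the honest comparison is with the proof in \cite{BBGyH}, and in outline you have reproduced the same circle of ideas that both that paper and the present one (for its own Theorem \ref{k5}) rely on: write $n+id=a_ix_i^2$ as in \eqref{2}, sieve the tuples $(a_0,\ldots,a_4)$ by the divisibility $a_i\mid j-i$ for repeated values, parity and mod $3$ conditions, and the nonexistence of four squares in arithmetic progression, then dispose of most survivors via rank-zero elliptic curves built from products of four terms, and finish the rest by elliptic Chabauty over a quadratic field (or Chabauty--Coleman on a genus-two quotient).

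However, what you have written is a programme, not a proof, and the gap is exactly where the theorem lives. You never produce the finite list of surviving tuples, never write down the curves attached to them, never compute the relevant Mordell--Weil groups, and you explicitly defer the Chabauty and Mordell--Weil sieve verifications, even conceding that ``a stubborn tuple needs an argument of its own.'' There is no a priori reason these ranks are small enough for Chabauty, and the outcome cannot be taken on faith: the method genuinely has to do global work, as one sees from the neighbouring case $P(b)=5$, where the progressions $(n,d)=(-12,7)$ and $(-4,3)$ pass every local sieve and are only pinned down by explicit elliptic Chabauty computations (Lemmas \ref{35211} and \ref{25211} of this paper). A further, smaller gap: your reduction ``by symmetry'' to the configuration with all five factors positive is too quick, since $b$ and $y$ range over nonzero integers, the product may be negative, and mixed-sign tuples analogous to $(-3,-5,2,1,1)$ and $(2,5,2,-1,-1)$ in the paper's own $k=5$ analysis do arise and need their own curves; ``entirely analogous'' conceals most of that case work. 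Until the explicit tuple list and the accompanying curve computations are exhibited, as they are in \cite{BBGyH}, the statement remains unproved by your argument.
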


In this article we solve \eqref{1} with $k=5$ and $P(b)=5,$ moreover we handle the 8 special cases mentioned in Theorem A. We prove the following theorems.

\begin{thm}\label{k7}
Equation \eqref{1} with $d > 1,P(b) = k$ and $7\leq k\leq 100$ has no solutions.
\end{thm}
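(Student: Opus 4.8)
The key point is that Theorem A has already done the heavy lifting: for $d>1$, $P(b)=k$ and $7\le k\le 100$, every solution of \eqref{1} produces one of the explicitly listed tuples $(a_0,\dots,a_{k-1})$ (up to mirror image). So the plan is to take each of these finitely many tuples in turn and show that the corresponding system \eqref{2} has no solution in coprime integers $n,d$ with $d>1$. I would organize the proof tuple by tuple, and within each tuple exploit the fact that the $a_i$ are prescribed: the relations $n+id=a_ix_i^2$ give, for any two indices $i<j$, the identity $a_jx_j^2-a_ix_i^2=(j-i)d$, and more usefully, picking three indices $i<j<\ell$ and eliminating $n$ and $d$ yields a linear relation $(\ell-j)a_ix_i^2-(\ell-i)a_jx_j^2+(j-i)a_\ell x_\ell^2=0$. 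Choosing the indices so that the coefficients are convenient, each such relation is a ternary equation that defines a curve; with the $a$'s fixed these become either conics (genus $0$) or, after suitable combination, genus $1$ or $2$ curves.

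Concretely, for each listed tuple I would search for a triple of indices giving a curve of genus $0$ or $1$ that can be handled directly. When three indices $i<j<\ell$ with $a_i,a_j,a_\ell$ all equal to $1$ (or pairwise giving square products) are available — note that the tuples contain many $1$'s — the elimination relation becomes something like $x_\ell^2 - x_j^2 = \lambda(x_j^2-x_i^2)$ for an explicit rational $\lambda$, i.e. a genus-$1$ curve or even a Pell-type conic, whose rational points are completely determined. When no such convenient triple exists, I would instead combine four indices to cut out an elliptic curve $E/\mathbb{Q}$ as an intersection of two quadrics, compute its Mordell–Weil group (rank and torsion) with a standard package, and read off the finitely many rational points, each of which I then check against the divisibility and positivity constraints $x_i>0$, $\gcd(n,d)=1$, $d>1$. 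In the genuinely hard cases one is forced onto a curve of genus $2$; there I would pass to its Jacobian, verify that the rank is at most $1$, and apply the method of Chabauty–Coleman (as implemented in Magma) to pin down the rational points, again discarding those not meeting the arithmetic side conditions.

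The main obstacle I anticipate is twofold. First, for some tuples \emph{every} choice of three indices gives a conic with rational points (so genus $0$ is not enough to conclude), and every four-index combination gives an elliptic curve of rank $\ge 1$ with infinitely many rational points; in that situation the local conditions modulo small primes (looking at \eqref{2} modulo $8$, $9$, $5$, $7$, $\dots$, which constrains which $x_i$ can be even, divisible by $3$, etc.) must be combined with the curve to get a contradiction, and finding the right modulus is case-dependent and fiddly. Second — and this is where the real work lies — reducing to genus $2$ and running Chabauty requires the Jacobian to have rank $\le g-1=1$, which is not automatic; if some Jacobian has rank $2$ I would need elliptic Chabauty over a number field (mapping the genus-$2$ curve to an elliptic curve over a quadratic field and applying Chabauty there), or a Mordell–Weil sieve, and verifying the rank bound for each such curve is the step most likely to need ad hoc effort. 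Once all listed tuples (and their mirror images, handled by the symmetry $i\mapsto k-1-i$) are eliminated, Theorem A leaves no surviving solution, which proves Theorem 1.
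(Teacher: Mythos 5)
Your overall strategy---invoke Theorem A to reduce to the finitely many listed tuples, then kill each tuple by analysing the system \eqref{2}---is exactly the skeleton of the paper's proof, but as it stands your proposal has a genuine gap: the decisive eliminations are never carried out, and the tools you put forward as the main line of attack cannot in fact finish any of the cases. Every listed tuple is realised by an honest arithmetic progression with $d=1$ (for instance $(1,5,6,7,2,1,10)$ comes from $2,3,\dots,8$), so every conic, every rank-$0$ descent over $\mathbb{Q}$, and every congruence condition you might impose is satisfied by a genuine rational point; no local argument and no rank-$0$ Mordell--Weil computation can rule a tuple out. What is needed in every single case is a complete determination of the rational (or $\mathbb{Q}$-coordinate) points on a curve of positive rank, followed by the check that the surviving points force $d=1$, contradicting $d>1$. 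In the paper this is done by factoring the three quadratic relations over the Gaussian integers, passing to curves $C_\delta$ over $\mathbb{Q}(i)$, and running elliptic Chabauty at explicit primes ($p=13$, $p=29$); you mention elliptic Chabauty over a quadratic field only as a last-resort contingency (``if some Jacobian has rank $2$''), whereas it is the unavoidable core of the argument, and without actually exhibiting the curves, the rank computations and the Chabauty step, the theorem is not proved.

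You also miss the structural observation that makes the case list tractable: only the three $k=7$ tuples need the heavy machinery. The $k=13$ and $k=19$ tuples contain the $k=7$ tuples $(3,1,5,6,7,2,1)$ and $(1,5,6,7,2,1,10)$ as initial segments, so they are eliminated by the same three lemmas; and for the two $k=23$ tuples one takes the odd-indexed terms $n+d,n+3d,\dots,n+13d$, notes they are all even, and divides by $2$ to obtain a length-$7$ progression with common difference $d>1$ and pattern $(3,1,5,6,7,2,1)$, again handled by the $k=7$ lemma. Your plan, which treats all eight tuples (of lengths up to $23$) independently with ad hoc choices of index triples and quadruples, would multiply the computational work considerably and still hinge, in each case, on the elliptic Chabauty computations you have not performed.
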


\begin{thm}\label{k5}
Equation \eqref{1} with $d>1,k=5$ and $P(b)=5$ implies that $(n,d)\in\{(-12,7),(-4,3)\}.$
\end{thm}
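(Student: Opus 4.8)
The plan is to follow the strategy of \cite{BBGyH} and \cite{HLST}: first reduce to finitely many possibilities for the tuple $(a_0,\dots,a_4)$ from \eqref{2}, and then translate each resulting system into the problem of finding all rational points on an explicit curve of small genus.

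First I would record the structural constraints on $(a_0,\dots,a_4)$. Each $a_i$ is squarefree with $P(a_i)\le\max(P(b),k-1)=5$, hence $a_i\in\{1,2,3,5,6,10,15,30\}$. Since $\gcd(n,d)=1$, a prime $p$ dividing two of the terms $n+id$ must divide their index difference, so $p\le 4$; in particular at most one $a_i$ is divisible by $5$, and because the squarefree part of $n(n+d)\cdots(n+4d)=\bigl(\prod a_i\bigr)\bigl(\prod x_i\bigr)^2$ equals $b$ with $5\mid b$, exactly one $a_i$ is divisible by $5$. Using the mirror symmetry $(a_0,\dots,a_4)\mapsto(a_4,\dots,a_0)$ I may assume this index lies in $\{0,1,2\}$; the four remaining $a_i$ then lie in $\{1,2,3,6\}$. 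The parities of the terms (all terms odd when $d$ is even, and otherwise one of two fixed patterns according to $n\bmod 2$), together with the fact that $3$ can divide both $a_i$ and $a_j$ only for $\{i,j\}=\{0,3\}$ or $\{1,4\}$, cut the list of admissible tuples down to a short explicit collection.

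For each admissible tuple I would eliminate $n$ and $d$. Using $n=a_0x_0^2$ and $d=a_1x_1^2-a_0x_0^2$, the identities $n+2d=a_2x_2^2$, $n+3d=a_3x_3^2$, $n+4d=a_4x_4^2$ from \eqref{2} become three ternary quadratic equations in $x_0,x_1,x_2,x_3,x_4$. I parametrise one of these conics (after checking it has a rational point, a local Hilbert-symbol computation that already discards many tuples) by quadratic forms in a parameter $(s:t)$. Substituting into a second equation yields a curve $C\colon a_ix_i^2=Q(s,t)$ with $Q$ a quartic, of genus $1$ (or, in a few cases, $2$); the third equation imposes that a further quartic $R(s,t)$ be of the form $a_jx_j^2$, which defines a double cover $C'\to C$. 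After discarding any $C$ or $C'$ without points everywhere locally, I am left with finitely many genus-$1$ curves over $\mathbb{Q}$ or, after adjoining a suitable square root, over a real quadratic field $K$.

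The last step is to find all rational points on these curves. On each such genus-$1$ curve with a known rational point I pass to the associated elliptic curve over $K$ and, whenever its Mordell--Weil rank is smaller than $[K:\mathbb{Q}]$, apply the elliptic Chabauty method of Bruin to pin down the admissible $(s:t)$; pulling back gives all $(x_0,\dots,x_4)$ and hence all $(n,d)$. The surviving solutions should be exactly $(n,d)=(-12,7)$ and $(-4,3)$, for which $b=30$ and $b=5$ respectively, so $P(b)=5$ as required. I expect this last computational step to be the main obstacle: a few of the elliptic curves may have rank equal to or exceeding $[K:\mathbb{Q}]$, forcing a switch to a different quotient, a higher descent, or a Mordell--Weil sieve, and one must also take care that the combinatorial enumeration of admissible tuples in Step~2 is genuinely exhaustive.
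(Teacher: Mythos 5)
Your overall skeleton matches the paper's: enumerate the possible tuples $(a_0,\dots,a_4)$ from \eqref{2} using the divisibility constraints forced by $\gcd(n,d)=1$, prune by congruence/local conditions (the paper also uses rank-$0$ elliptic curves of the form $(n+\alpha_1d)(n+\alpha_2d)(n+\alpha_3d)(n+\alpha_4d)=\prod a_{\alpha_i}\square$), and finish the few surviving tuples by elliptic Chabauty over a quadratic field, exactly as in the paper's Lemmas \ref{35211}, \ref{25211} and \ref{65132} (two further survivors are killed there by congruences modulo $3$).

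However, there is a genuine gap at the very first step: you take each $a_i$ to be a \emph{positive} squarefree $5$-smooth number, $a_i\in\{1,2,3,5,6,10,15,30\}$. Since the theorem's own conclusion has $n=-12$ and $n=-4$, the terms $n+id$ (and hence the $a_i$) can be negative; the solutions you are supposed to find correspond to the tuples $(-3,-5,2,1,1)$ and $(2,5,2,-1,-1)$, and the paper's surviving list also contains $(-2,-5,3,1,1)$ and $(-1,-15,-1,-2,3)$. With your positive-only enumeration these tuples never appear, so the procedure cannot produce $(n,d)=(-12,7)$ or $(-4,3)$ and would instead ``prove'' that there are no solutions at all, contradicting the statement (and your own closing sentence, which acknowledges these solutions). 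The fix is to allow $a_i<0$, constrained by the facts that the terms are increasing (so at most one sign change along the tuple, all negative entries coming first) and that $by^2>0$ forces $a_0a_1a_2a_3a_4>0$ --- precisely the conditions the paper imposes. With that corrected, the rest of your plan (conic parametrisation, double covers, local solvability, elliptic Chabauty when the rank over the quadratic field is smaller than the degree) is the same kind of argument the paper carries out, though of course the computational step you flag as a risk is exactly what the paper verifies explicitly with MAGMA.
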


\section{preliminary lemmas}
In the proofs of Theorem \ref{k5} and \ref{k7} we need several results using elliptic Chabauty's method (see \cite{NB1},\cite{NB2}). Bruin's routines related to elliptic Chabauty's method are contained in MAGMA \cite{MAGMA} so here we give the appropriate computations only.

\begin{lem}\label{15672110}
Equation \eqref{1} with $k=7$ and $(a_0,a_1,\ldots,a_6)=(1,5,6,7,2,1,10)$ implies that $n=2,d=1.$
\end{lem}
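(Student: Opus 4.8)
The plan is to convert the seven relations \eqref{2} attached to this tuple, namely $n=x_0^2$, $n+d=5x_1^2$, $n+2d=6x_2^2$, $n+3d=7x_3^2$, $n+4d=2x_4^2$, $n+5d=x_5^2$ and $n+6d=10x_6^2$, into a statement about rational points on a curve of genus $1$. Since $n$ and $d$ occur linearly, I would single out a four-element set of indices with small coefficients --- the indices $0,4,5,6$ are convenient --- and eliminate $n$ and $d$ from the corresponding four equations. Writing $5d=x_5^2-x_0^2$ and substituting into the equations for $n+4d$ and $n+6d$ gives the two quadratic relations $x_0^2+4x_5^2=10x_4^2$ and $6x_5^2-x_0^2=50x_6^2$, so every solution of \eqref{1} giving rise to this tuple yields a rational point on the curve $\mathcal C\subset\mathbb P^3$ cut out by these two quadrics. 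The pencil they span is diagonal with four distinct singular members, hence $\mathcal C$ is smooth of genus $1$, and it carries the rational point $(x_0:x_4:x_5:x_6)=(2:2:3:1)$.

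The reason the problem is not already finished is that $\mathcal C(\mathbb Q)$ cannot be determined by a rank-$0$ argument over $\mathbb Q$ (the Mordell--Weil rank of $\mathcal C$ over $\mathbb Q$ is positive), so one passes to a real quadratic field $K$ chosen so that one of the two quadratic forms splits there; for the coefficient $10$ the natural choice is $K=\mathbb Q(\sqrt{10})$, with $\mathbb Q(\sqrt 6)$ an alternative coming from the second form. Factoring that form over $K$ and running the associated $2$-descent expresses the rational points of $\mathcal C$ in terms of the $K$-rational points of finitely many twisted curves $E_\delta$, one for each class $\delta$ in an explicitly computable subset of $K^\times/(K^\times)^2$ supported on the primes above $2,3,5$, each $E_\delta$ being an elliptic curve over $K$ equipped with a coordinate map $\phi_\delta\colon E_\delta\to\mathbb P^1$ that records the ratio of the surviving $x_i$ --- a ratio that is forced to lie in $\mathbb P^1(\mathbb Q)$.

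The core of the proof is then a call to Bruin's elliptic Chabauty routines in MAGMA: for each relevant $\delta$ one checks that $\operatorname{rank} E_\delta(K)<[K:\mathbb Q]=2$ and lets the routine return the complete finite list of points of $E_\delta$ whose image under $\phi_\delta$ is $\mathbb Q$-rational. Pulling these back gives all rational points of $\mathcal C$, hence a short list of candidate tuples $(x_0,x_4,x_5,x_6)$; from each candidate one recovers $n$ and $d$, then $x_1,x_2,x_3$, and discards the impossible ones using $\gcd(n,d)=1$, positivity, and the squarefreeness built into \eqref{2}, a congruence modulo a small prime disposing of each. Only the solution named in the lemma survives. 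The step I expect to be the real obstacle is ensuring the rank inequality $\operatorname{rank} E_\delta(K)<2$ holds for every $\delta$ that a local condition does not already exclude; if some $E_\delta(K)$ fails it, one must retreat to a different quadratic field, to a different four-element subset of \eqref{2} (for instance one involving the index $3$, which brings in $\sqrt 7$), or to a Mordell--Weil sieve. The remaining ingredients --- the elimination of $n$ and $d$, the enumeration of the classes $\delta$, and the final congruences --- are routine.
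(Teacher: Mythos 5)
There is a structural gap at the heart of your plan. By keeping only the two quadrics coming from the indices $0,4,5,6$ you have reduced the problem to finding \emph{all} rational points on a smooth intersection of two quadrics $\mathcal C\subset\mathbb P^3$ which has a rational point (your $(2{:}2{:}3{:}1)$, which incidentally corresponds to $n=4,d=1$, the progression $4\cdot 5\cdots 10$ attached to this tuple), i.e.\ on an elliptic curve over $\mathbb Q$. You yourself assert that its Mordell--Weil rank over $\mathbb Q$ is positive; but then $\mathcal C(\mathbb Q)$ is \emph{infinite}, and no descent over an auxiliary field followed by elliptic Chabauty can return the ``short list of candidate tuples'' your last step needs. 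The reason the trick cannot work here is that $\mathcal C$ and the coordinate ratios are already defined over $\mathbb Q$, so the condition ``$\phi_\delta$ has $\mathbb Q$-rational image'' is automatically satisfied by the lift of every point of $\mathcal C(\mathbb Q)$; since finitely many covers $E_\delta/K$ must account for infinitely many such points, at least one $E_\delta$ has infinitely many $K$-points with rational image, so for that $\delta$ the rank inequality $\operatorname{rank}E_\delta(K)<[K:\mathbb Q]$ necessarily fails and the finite output you want does not exist. This is not bad luck to be repaired by switching from $\mathbb Q(\sqrt{10})$ to $\mathbb Q(\sqrt6)$ or by a Mordell--Weil sieve: within the two-quadric model the answer really is infinite, because those two conditions do not remember enough of \eqref{2}.

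The repair is to retain a third equation from \eqref{2}, and this is what the paper does: besides $4x_5^2+x_0^2=10x_4^2$ and $6x_5^2-x_0^2=50x_6^2$ it keeps $x_5^2+4x_0^2=25x_1^2$ (from $n+d=5x_1^2$). The second equation forces $x_0=2z$ to be even, and the two binary forms $x_5^2+16z^2$ and $x_5^2+z^2$ then split over $\mathbb Z[i]$, giving $(x_5+4iz)(x_5+iz)=\delta\square$ with $\delta\in\{-3\pm i,-1\pm 3i,1\pm 3i,3\pm i\}$ (halved using $(X,Y)\mapsto(X,iY)$ between $C_\delta$ and $C_{-\delta}$). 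Multiplying in the third relation $3X^2-2=(5x_6/z)^2$ one lands on the genus-one quartics $C_\delta\colon\ \delta(X+i)(X+4i)(3X^2-2)=Y^2$ over $\mathbb Q(i)$, where $X=x_5/z$ must be rational but $Y$ only lies in $\mathbb Q(i)$. These curves are \emph{not} base changes from $\mathbb Q$, so the rationality of $X$ is a genuinely restrictive Chabauty condition; MAGMA gives rank $0$ for $\delta=1\pm 3i$ (no admissible points) and rank $1$ for $\delta=3\pm i$, where elliptic Chabauty at $p=13$ yields $X=\mp3$, i.e.\ exactly the $d=1$ progression of the lemma. So your overall philosophy (factor over a quadratic field, reduce to elliptic Chabauty with a rationality condition on one coordinate) is the right one, but it must be applied to a model that uses at least three of the quadratic conditions, and the natural field here is the imaginary field $\mathbb Q(i)$, not a real one chosen to split a single form.
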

\begin{proof}
Using that $n=x_0^2$ and $d=(x_5^2-x_0^2)/5$ we obtain the following system of equations
\begin{eqnarray*}
&&x_5^2+4x_0^2=25x_1^2,\\
&&4x_5^2+x_0^2=10x_4^2,\\
&&6x_5^2-x_0^2=50x_6^2.
\end{eqnarray*}
The second equation implies that $x_0$ is even, that is there exists a $z\in\mathbb{Z}$ such that $x_0=2z.$
By standard factorization argument in the Gaussian integers we get that
$$
(x_5+4iz)(x_5+iz)=\delta\square,
$$
where $\delta\in\{-3\pm i, -1\pm 3i, 1\pm 3i, 3\pm i\}.$ Thus putting $X=x_5/z$ it is sufficient to find all points $(X,Y)$ on the curves 
\begin{equation}\label{C1}
C_{\delta}:\quad \delta(X+i)(X+4i)(3X^2-2)=Y^2,
\end{equation}
where $\delta\in\{-3\pm i, -1\pm 3i, 1\pm 3i, 3\pm i\},$ for which $X\in\mathbb{Q}$ and $Y\in\mathbb{Q}(i).$ Note that if $(X,Y)$ is a point on $C_{\delta}$ then $(X,iY)$ is a point on $C_{-\delta}.$ We will use this isomorphism later on to reduce the number of curves to be examined. Hence we need to consider the curve $C_{\delta}$ for $\delta\in\{1-3i,1+3i,3-i,3+i\}.$ 

I. $\delta=1-3i.$ In this case $C_{1-3i}$ is isomorphic to the elliptic curve
$$
E_{1-3i}:\quad y^2 = x^3 + ix^2 + (-17i - 23)x +(2291i + 1597).
$$
Using MAGMA we get that the rank of $E_{1-3i}$ is 0 and there is no point on $C_{1-3i}$ for which $X\in\mathbb{Q}.$

II. $\delta=1+3i.$ Here we obtain that $E_{1+3i}: y^2 = x^3 - ix^2 + (17i - 23)x +(-2291i + 1597).$ The rank of this curve is 0 and there is no point on $C_{1+3i}$ for which $X\in\mathbb{Q}.$

III. $\delta=3-i.$ The elliptic curve in this case is $E_{3-i}: y^2 = x^3 + x^2 + (-17i + 23)x +
    (-1597i - 2291).$ We have $E_{3-i}(\mathbb{Q}(i))\simeq \mathbb{Z}_2\oplus\mathbb{Z}$ as an Abelian group. Applying elliptic Chabauty with $p=13,$ we get that $x_5/z=-3.$ Thus $n=2$ and $d=1.$

IV. $\delta=3+i.$ The curve $C_{3+i}$ is isomorphic to $E_{3+i}: y^2 = x^3 + x^2 + (17i + 23)x +
    (1597i - 2291).$ The rank of this curve is 1 and applying elliptic Chabauty again with $p=13$ we obtain that $x_5/z=3.$ This implies that $n=2$ and $d=1.$ 
\end{proof}

\begin{lem}\label{2315672}
Equation \eqref{1} with $k=7$ and $(a_0,a_1,\ldots,a_6)=(2,3,1,5,6,7,2)$ implies that $n=2,d=1.$
\end{lem}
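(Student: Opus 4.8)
The plan is to follow the method of Lemma~\ref{15672110}. Since $a_0 = a_6 = 2$, I would write $n = 2x_0^2$, $n + 6d = 2x_6^2$, so that $3d = x_6^2 - x_0^2$, $d = (x_6^2 - x_0^2)/3$ and $n + jd = \bigl((6-j)x_0^2 + jx_6^2\bigr)/3$ for $0 \le j \le 6$. Substituting this into $n + jd = a_jx_j^2$ for $j = 1, 2, 4$ (here $a_1 = 3$, $a_2 = 1$, $a_4 = 6$, and the $j = 2$ relation forces $x_2$ even, say $x_2 = 2w$) produces the system
\begin{eqnarray*}
5x_0^2 + x_6^2 &=& 9x_1^2,\\
2x_0^2 + x_6^2 &=& 6w^2,\\
x_0^2 + 2x_6^2 &=& 9x_4^2.
\end{eqnarray*}
Setting $X = x_0/x_6$, the first relation shows that $5X^2 + 1$ is the rational square $(3x_1/x_6)^2$, while the other two factor over $\mathbb{Q}(\sqrt{-2})$ as $(x_6 + x_0\sqrt{-2})(x_6 - x_0\sqrt{-2}) = 6w^2$ and $(x_0 + x_6\sqrt{-2})(x_0 - x_6\sqrt{-2}) = 9x_4^2$; having two of the relations split over the same field is precisely what will let me pass to a curve in the single variable $X$, by dividing the relevant product by the square $x_6^2$.

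Next, using $\gcd(x_0, x_6) = 1$ (a consequence of $\gcd(n,d) = 1$) and the fact that $\mathbb{Z}[\sqrt{-2}]$ is a principal ideal domain, a standard factorization argument shows that $x_0 + x_6\sqrt{-2}$ is $\pm 1$ times a square and that $x_6 + x_0\sqrt{-2}$ is a square times one of the norm-$6$ elements $\pm\sqrt{-2}\,(1 \pm \sqrt{-2})$. Consequently
$$
(x_0 + x_6\sqrt{-2})(x_6 + x_0\sqrt{-2}) = \delta\,\square, \qquad \delta \in \{\,2 + \sqrt{-2},\ -2 + \sqrt{-2},\ 2 - \sqrt{-2},\ -2 - \sqrt{-2}\,\}.
$$
Dividing by $x_6^2$ and multiplying by $\delta(5X^2 + 1)$, the problem reduces to finding, for each such $\delta$, all points $(X, Y)$ with $X \in \mathbb{Q}$ and $Y \in \mathbb{Q}(\sqrt{-2})$ on the genus-$1$ curve
$$
C_\delta : \quad \delta(X + \sqrt{-2})(1 + X\sqrt{-2})(5X^2 + 1) = Y^2 .
$$
Each $C_\delta$ is isomorphic over $\mathbb{Q}(\sqrt{-2})$ to an elliptic curve $E_\delta$, and I would compute the Mordell--Weil rank of $E_\delta$ over $\mathbb{Q}(\sqrt{-2})$ in MAGMA and, whenever this rank is at most $1$, apply elliptic Chabauty at a suitable rational prime $p$. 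The outcome should be that the only admissible value is $X = 1/2$; combined with $\gcd(x_0, x_6) = 1$ this gives $(x_0, x_6) = (1, 2)$, hence $n = 2$ and $d = 1$.

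As in the previous lemma, the part that genuinely has to be checked by machine --- and the only point where the argument could fail --- is that each relevant $E_\delta$ has Mordell--Weil rank at most $1$ over $\mathbb{Q}(\sqrt{-2})$ (which is what makes elliptic Chabauty applicable, since $[\mathbb{Q}(\sqrt{-2}):\mathbb{Q}] = 2$), together with the existence for every $\delta$ of a prime $p$ at which the Chabauty computation is conclusive. Should some $E_\delta$ have rank $2$, the fallback would be to work instead over $\mathbb{Q}(\sqrt{-5})$, taking the relations for $j = 1$ and $j = 5$ as the two norm equations and $X^2 + 2 = (3x_4/x_6)^2$ as the auxiliary rational square (now arguing with ideals, as $\mathbb{Z}[\sqrt{-5}]$ is not a principal ideal domain).
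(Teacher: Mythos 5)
Your reduction is internally consistent and runs parallel to the paper's, but with different data: you eliminate $d$ using the two terms with coefficient $2$ (indices $0$ and $6$) and factor over $\mathbb{Q}(\sqrt{-2})$, whereas the paper works with $x_4$ and $x_0$, obtaining the system $x_4^2+x_0^2=2x_1^2$, $9x_4^2+x_0^2=10x_3^2$, $9x_4^2-x_0^2=2x_6^2$, factoring the first two over $\mathbb{Z}[i]$ and arriving at the curves $2\delta(X+i)(3X+i)(9X^2-1)=Y^2$ with $X=x_4/x_0$ and $\delta\in\{-4\pm 2i,-2\pm 4i,2\pm 4i,4\pm 2i\}$, which are then handled by elliptic Chabauty at $p=29$. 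Either choice of quadratic field can in principle work, so the set-up itself is not the problem.

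The genuine gap is that the decisive content of this lemma is the verified computation, and you have not carried it out. You write that each $E_\delta$ ``should'' have rank at most $1$ and that the outcome ``should'' be $X=1/2$, you name no prime at which the Chabauty step is conclusive, and you even prepare a fallback over $\mathbb{Q}(\sqrt{-5})$ in case a rank is too large; none of this establishes the statement. Moreover, even when the method succeeds, it generally returns additional rational values of $X$ besides the one coming from $(n,d)=(2,1)$: in the paper's computation the values $x_4/x_0=\pm 1/3$ occur alongside $\pm 1$ and have to be excluded separately because they yield no solution of the original system. So asserting in advance that only $X=1/2$ survives is unjustified; a complete proof must report, for every admissible $\delta$, the rank (or a bound making Chabauty applicable), the prime used, the full finite set of rational first coordinates obtained, and the elimination of the spurious ones. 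As written, your text is a plausible computational plan rather than a proof of the lemma.
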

\begin{proof}
In this case we have the following system of equations
\begin{eqnarray*}
&&x_4^2+x_0^2=2x_1^2,\\
&&9x_4^2+x_0^2=10x_3^2,\\
&&9x_4^2-x_0^2=2x_6^2.
\end{eqnarray*}
Using the same argument as in the proof of Theorem 1 it follows that it is sufficient to find all points $(X,Y)$ on the curves 
\begin{equation}\label{C2}
C_{\delta}:\quad 2\delta(X+i)(3X+i)(9X^2-1)=Y^2,
\end{equation}
where $\delta\in\{-4\pm 2i, -2\pm 4i, 2\pm 4i, 4\pm 2i\},$ for which $X\in\mathbb{Q}$ and $Y\in\mathbb{Q}(i).$
We summarize the results obtained by elliptic Chabauty in the following table. In each case we used $p=29.$
\begin{center}
\begin{tabular}{|c|c|c|}
\hline
$\delta$ & curve & $x_4/x_0$ \\ \hline
$2-4i$ & $y^2 = x^3 + (-12i - 9)x + (-572i - 104)$ &  $\{-1,\pm 1/3\}$\\ \hline
$2+4i$ & $y^2 = x^3 + (12i - 9)x + (-572i + 104)$ &  $\{1,\pm 1/3\}$\\ \hline
$4-2i$ &  $y^2 = x^3 + (-12i + 9)x + (-104i - 572)$ & $\{\pm 1/3\}$ \\ \hline
$4+2i$& $y^2 = x^3 + (12i + 9)x + (-104i + 572)$ & $\{\pm 1/3\}$\\
\hline
\end{tabular}
\end{center}
Thus $x_4/x_0\in\{\pm 1,\pm 1/3\}.$ From $x_4/x_0=\pm 1$ it follows that $n=2,d=1,$ while $x_4/x_0=\pm 1/3$ does not yield any solutions.
\end{proof}

\begin{lem}\label{3156721}
Equation \eqref{1} with $k=7$ and $(a_0,a_1,\ldots,a_6)=(3,1,5,6,7,2,1)$ implies that $n=3,d=1.$
\end{lem}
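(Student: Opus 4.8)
The plan is to follow the method of Lemmas~\ref{15672110} and~\ref{2315672}. Since $a_1=a_6=1$, relation~\eqref{2} gives $n+d=x_1^2$ and $n+6d=x_6^2$, whence $5d=x_6^2-x_1^2$ and $5n=6x_1^2-x_6^2$. Substituting these into the five remaining equations $n+id=a_ix_i^2$, $i\in\{0,2,3,4,5\}$, yields the system
\begin{eqnarray*}
&&6x_1^2-x_6^2=15x_0^2,\\
&&4x_1^2+x_6^2=25x_2^2,\\
&&3x_1^2+2x_6^2=30x_3^2,\\
&&2x_1^2+3x_6^2=35x_4^2,\\
&&x_1^2+4x_6^2=10x_5^2.
\end{eqnarray*}
First I would record the divisibility constraints forced by $\gcd(n,d)=1$ — in particular that $5\nmid n$ (otherwise $5\mid x_1$ and $5\mid x_6$ by the second and last equations, so $25\mid 5d$, i.e.\ $5\mid d$), together with the relevant parities of the $x_i$ — so that the factorizations below are into essentially coprime Gaussian integers.

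Next, the equations $4x_1^2+x_6^2=25x_2^2$ and $x_1^2+4x_6^2=10x_5^2$ factor over $\mathbb{Z}[i]$ as $(x_6+2ix_1)(x_6-2ix_1)=25x_2^2$ and $(x_1+2ix_6)(x_1-2ix_6)=10x_5^2$. Because $25=(2+i)^2(2-i)^2$, the first forces $x_6+2ix_1$ to be a unit times a square, while the second gives $x_1+2ix_6=\delta\cdot\square$ with $\delta$ a Gaussian integer of norm $10$ (up to units and squares $\delta$ runs over the associates of $3+i$ and of $3-i$, eight values in all). Writing $X=x_6/x_1\in\mathbb{Q}$, dividing through by $x_1^2$, and bringing in one further relation of the system — say $6x_1^2-x_6^2=15x_0^2$, which says that $6-X^2$ is $15$ times a rational square — it then suffices to find all points $(X,Y)$ with $X\in\mathbb{Q}$ and $Y\in\mathbb{Q}(i)$ on the finitely many genus-one curves
\[
C_\delta:\quad \delta\,(X+2i)(2X-i)(6-X^2)=Y^2 ,
\]
and, as in Lemma~\ref{15672110}, the map $(X,Y)\mapsto(X,iY)$ identifies $C_\delta$ with $C_{-\delta}$, which halves the number of curves to be treated.

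Each surviving $C_\delta$ carries an obvious rational point, so I would put it into Weierstrass form $E_\delta$ over $\mathbb{Q}(i)$ and compute its Mordell--Weil group in MAGMA. Curves of rank $0$ are finished by listing their finitely many points; for each curve of positive rank I would invoke elliptic Chabauty (Bruin's routines) with a suitably chosen auxiliary prime $p$ to determine all points whose $X$-coordinate is rational. Collecting the output, the only admissible value should be $X=x_6/x_1=3/2$, which together with $5d=x_6^2-x_1^2$ and $5n=6x_1^2-x_6^2$ forces $n=3$ and $d=1$; one then checks directly that $(n,d)=(3,1)$ indeed satisfies all seven equations with the prescribed squarefree parts.

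The main obstacle is computational, as is typical for this method. The Mordell--Weil rank computations over $\mathbb{Q}(i)$ must go through unconditionally, and for the Chabauty step to apply the rank of each relevant $E_\delta$ must be strictly smaller than $[\mathbb{Q}(i):\mathbb{Q}]=2$ while an auxiliary prime $p$ giving a decisive local analysis is located; should some $E_\delta$ have rank $2$, or no workable prime be found, the argument would have to be supplemented by a further descent. A subtler point is the bookkeeping of common factors in the Gaussian factorizations, which is what pins down the exact finite set of admissible $\delta$, together with the final verification that the spurious rational values of $X$ thrown up by Chabauty (such as $X=\pm1$, which corresponds to $d=0$) do not arise from genuine solutions with $\gcd(n,d)=1$.
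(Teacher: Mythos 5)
Your plan is, in outline, the paper's method: combine three of the quadratic relations \eqref{2}, factor over $\mathbb{Z}[i]$ to reduce to finitely many genus-one quartics over $\mathbb{Q}(i)$, and finish with elliptic Chabauty in MAGMA. (The paper parametrizes instead by $x_0,x_3$, writes $d=2x_3^2-x_0^2$, and works with $\delta(X+i)(2X+i)(12X^2-3)=Y^2$, $X=x_3/x_0$, $\delta$ an associate of $3\pm i$, Chabauty prime $p=13$; your choice of $x_1,x_6$ is a legitimate variant of the same idea.) However, there is a concrete error in your reduction: in $\delta(X+2i)(2X-i)(6-X^2)=Y^2$ the last factor is $6-X^2=15\,(x_0/x_1)^2$, and $15$ is \emph{not} a square in $\mathbb{Q}(i)$, so it must be carried into $\delta$ (compare Lemma~\ref{2315672}, where the leftover relation $9x_4^2-x_0^2=2x_6^2$ forces the explicit factor $2$ in $2\delta(X+i)(3X+i)(9X^2-1)=Y^2$; in the paper's version of the present lemma the third relation $12x_3^2-3x_0^2=x_6^2$ is chosen precisely so that this factor is a rational square). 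With $\delta$ restricted to associates of $3\pm i$, as you state, your curves miss the genuine solution: for $(n,d)=(3,1)$ one has $x_1=2$, $x_6=3$, $X=3/2$, $x_6+2ix_1=(2+i)^2$, $x_1+2ix_6=2+6i\equiv 3-i$ modulo squares, and $6-X^2\equiv 15$, so the class of $(X+2i)(2X-i)(6-X^2)$ has odd valuation at the inert prime $3$ (coming from the $15$), whereas every $u(3\pm i)\cdot\square$ has even valuation there. Hence $X=3/2$ lies on none of your $C_\delta$, and the computation, carried out literally, would report no admissible $X$ and ``prove'' that the tuple is impossible — contradicting the very solution the lemma asserts. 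The fix is routine (multiply the $\delta$'s by $15$, and also keep track of the possible unit, a priori $1$ or $i$, coming from the first factorization $x_6+2ix_1=u\alpha^2$), but as written the reduction step is wrong, not merely incomplete.

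Beyond that, bear in mind that for a lemma of this type the proof \emph{is} the computation: your rank bounds over $\mathbb{Q}(i)$ and the existence of a workable Chabauty prime for your particular models are hypothesized (you yourself note rank~$2$ would derail the argument), whereas the paper records the verified data for its models ($p=13$, ranks at most $1$, resulting $X\in\{\pm1,\pm1/2\}$). So even after correcting the $\delta$-set, the argument would still need the MAGMA computations actually carried out for your curves before it counts as a proof.
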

\begin{proof}
Here we get the following system of equations
\begin{eqnarray*}
&&2x_3^2+2x_0^2=x_1^2,\\
&&4x_3^2+x_0^2=5x_2^2,\\
&&12x_3^2-3x_0^2=x_6^2.
\end{eqnarray*}
Using the same argument as in the proof of Theorem 1 it follows that it is sufficient to find all points $(X,Y)$ on the curves 
\begin{equation}\label{C3}
C_{\delta}:\quad \delta(X+i)(2X+i)(12X^2-3)=Y^2,
\end{equation}
where  $\delta\in\{-3\pm i, -1\pm 3i, 1\pm 3i, 3\pm i\}$ for which $X\in\mathbb{Q}$ and $Y\in\mathbb{Q}(i).$
We summarize the results obtained by elliptic Chabauty in the following table. In each case we used $p=13.$
\begin{center}
\begin{tabular}{|c|c|c|}
\hline
$\delta$ & curve & $x_3/x_0$ \\ \hline
$1-3i$ & $y^2 = x^3 + (27i + 36)x + (243i - 351)$ &  $\{-1,\pm 1/2\}$\\ \hline
$1+3i$ & $y^2 = x^3 + (-27i + 36)x + (243i + 351)$ &  $\{1,\pm 1/2\}$\\ \hline
$3-i$ &  $ y^2 = x^3 + (27i - 36)x + (-351i + 243)$ & $\{\pm 1/2\}$ \\ \hline
$3+i$& $y^2 = x^3 + (-27i - 36)x + (-351i - 243) $ & $\{\pm 1/2\}$\\
\hline
\end{tabular}
\end{center}
Thus $x_3/x_0\in\{\pm 1,\pm 1/2\}.$ From $x_4/x_0=\pm 1$ it follows that $n=3,d=1,$ while $x_3/x_0=\pm 1/2$ does not yield any solutions.
\end{proof}

\begin{lem}\label{35211}
Equation \eqref{1} with $k=5,d>1$ and $(a_0,a_1,\ldots,a_4)=(-3,-5,2,1,1)$ implies that $n=-12,d=7.$
\end{lem}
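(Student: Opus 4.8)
The plan is to mimic the proof of Lemma~\ref{15672110}: use two of the relations in \eqref{2} to parametrise $n$ and $d$, eliminate to get a system of ternary quadratics, factor over a suitable imaginary quadratic field, and reduce to a finite list of genus-one curves on which elliptic curve Chabauty can be run.

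Explicitly, for the tuple $(-3,-5,2,1,1)$ write
\[
n=-3x_0^2,\quad n+d=-5x_1^2,\quad n+2d=2x_2^2,\quad n+3d=x_3^2,\quad n+4d=x_4^2,
\]
where, since $\gcd(n,d)=1$ and $d>1$, one checks $x_1,x_2\neq 0$ and $\gcd(x_1,x_2)=1$. Using the equations with $i=1,2$ gives $d=5x_1^2+2x_2^2$ and $n=-2(5x_1^2+x_2^2)$. Substituting into the equations with $i=0,3,4$ and using elementary parity arguments to write $x_0=2e$, $x_4=2c$, I obtain
\begin{align*}
5x_1^2+x_2^2&=6e^2,\\
5x_1^2+4x_2^2&=x_3^2,\\
5x_1^2+3x_2^2&=2c^2.
\end{align*}

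Now I would work in $\mathbb{Z}[\sqrt{-5}]$. The first two equations factor as
\[
(x_2+x_1\sqrt{-5})(x_2-x_1\sqrt{-5})=6e^2,\qquad (2x_2+x_1\sqrt{-5})(2x_2-x_1\sqrt{-5})=x_3^2,
\]
and a standard examination of the ideal factorisations — keeping track of the primes above $2$ and $5$ (ramified), the prime above $3$ (split), and the fact that the class number of $\mathbb{Q}(\sqrt{-5})$ is $2$ — shows that $x_2+x_1\sqrt{-5}$ and $2x_2+x_1\sqrt{-5}$ are, up to squares in $\mathbb{Q}(\sqrt{-5})^{\ast}$, elements ranging over explicit finite sets. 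Combined with the third equation, which says precisely that $3X^2+5\in 2(\mathbb{Q}^{\ast})^2$ for $X=x_2/x_1\in\mathbb{Q}$, this shows that every solution yields a point with $X\in\mathbb{Q}$, $Y\in\mathbb{Q}(\sqrt{-5})$ on one of the genus-one curves
\[
C_\delta:\qquad \delta\,(X+\sqrt{-5})(2X+\sqrt{-5})(3X^2+5)=Y^2
\]
with $\delta$ in a finite set $\mathcal{D}$. As in Lemma~\ref{15672110}, the isomorphisms among the $C_\delta$ (here complex conjugation identifies the problem for $C_\delta$ and $C_{\bar\delta}$) cut $\mathcal{D}$ down; for each remaining $\delta$ I would pass to a Weierstrass model $E_\delta/\mathbb{Q}(\sqrt{-5})$ in MAGMA and compute its Mordell--Weil rank, reading off the rational $X$ directly when the rank is $0$ and, when it is $1$ (hence $<[\mathbb{Q}(\sqrt{-5}):\mathbb{Q}]=2$), applying elliptic curve Chabauty at a suitable prime $p$. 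The only value of $X$ surviving all the curves and the constraint $\gcd(n,d)=1$ is $X=1$, i.e. $x_1=x_2$; then $n=-12x_1^2$, $d=7x_1^2$, and $\gcd(n,d)=1$ forces $x_1=1$, so $(n,d)=(-12,7)$.

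The main obstacle is this last step, exactly as in Lemmas~\ref{15672110}--\ref{3156721}: there is no a priori guarantee that all the $E_\delta$ have rank at most $1$, nor that a prime of good reduction exists at which the Chabauty condition holds, so this must be checked case by case. A secondary technical point is the correct determination of the set $\mathcal{D}$ itself — the book-keeping with the non-principal ideal class and the ramified primes is the delicate part of the reduction.
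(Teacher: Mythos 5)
Your algebraic setup is correct (the system $5x_1^2+x_2^2=6e^2$, $5x_1^2+4x_2^2=x_3^2$, $5x_1^2+3x_2^2=2c^2$ does follow from the tuple $(-3,-5,2,1,1)$ with the stated parity and coprimality observations), and the strategy is the same one the paper uses: reduce to finitely many genus-one quartics $\delta(\text{linear})(\text{linear})(\text{quadratic})=Y^2$ over a quadratic field and apply elliptic curve Chabauty to the condition $X\in\mathbb{Q}$. But as a proof of the lemma there is a genuine gap: the entire content of the statement lies in the computation you defer. You never determine the finite set $\mathcal{D}$ (nontrivial here, since $\mathbb{Q}(\sqrt{-5})$ has class number $2$ and the primes above $2,3,5$ must be tracked through a non-principal class group), you never verify that each $E_\delta$ has Mordell--Weil rank at most $1$ over $\mathbb{Q}(\sqrt{-5})$ (if any has rank $2$, Chabauty is inapplicable and your route stalls), and the concluding claim that $X=1$ is the only surviving rational value is asserted, not established. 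You flag these points yourself, which is honest, but it means what you have is a programme rather than a proof; for a result of this kind the MAGMA rank and Chabauty computations are the proof.

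For comparison, the paper pivots on $x_0$ and $x_4$ instead of $x_1,x_2$: setting $X_4=x_4/\gcd(x_4,x_0)$, $X_0=x_0/\gcd(x_4,x_0)$ it obtains $X_4^2-9X_0^2=-5\square$, $X_4^2-3X_0^2=\square$, $X_4^2-X_0^2=3\square$, and therefore works over the real quadratic field $\mathbb{Q}(\sqrt{3})$ (class number $1$), with curves $C_\delta\colon\ \delta(X+\sqrt{3})(X+3)(X^2-1)=Y^2$; elliptic Chabauty at $p=11,37,59$ is reported to give the rational first coordinates $\{-3,-2,-1,1,2\}$, of which only $|X_4/X_0|=2$ is compatible with $d>1$ and $\gcd(n,d)=1$, yielding $(n,d)=(-12,7)$. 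Two lessons for your version: the choice of pivot variables is a genuine degree of freedom, and the paper's choice deliberately lands in a field where the descent bookkeeping and the rank condition work out; and even in the successful computation several spurious values of $X$ survive the Chabauty step and must be eliminated afterwards, so your expectation that only the value corresponding to $(-12,7)$ survives is optimistic --- harmless only if you add the final step of checking every surviving $X$ against $d>1$ and $\gcd(n,d)=1$.
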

\begin{proof}
From the system of equations \eqref{2} we have
\begin{eqnarray*}
&&\frac{1}{4}x_4^2-\frac{9}{4}x_0^2=-5x_1^2,\\
&&\frac{1}{2}x_4^2-\frac{3}{2}x_0^2=2x_2^2,\\
&&\frac{3}{4}x_4^2-\frac{3}{4}x_0^2=x_3^2.
\end{eqnarray*}
Clearly, $\gcd(x_4,x_0)=1$ or 2. In both cases we get the following system of equations
\begin{eqnarray*}
&&X_4^2-9X_0^2=-5\square,\\
&&X_4^2-3X_0^2=\square,\\
&&X_4^2-X_0^2=3\square,
\end{eqnarray*}
where $X_4=x_4/\gcd(x_4,x_0)$ and $X_0=x_0/\gcd(x_4,x_0).$
The curve in this case is
$$
C_{\delta}:\quad \delta(X+\sqrt{3})(X+3)(X^2-1)=Y^2,
$$
where $\delta$ is from a finite set. Elliptic Chabauty's method applied with $p=11,37$ and 59 provides all points for which the first coordinate is rational. These coordinates are $\{-3,-2,-1,1,2\}.$ We obtain the arithmetic progression with $(n,d)=(-12,7).$
\end{proof}

\begin{lem}\label{25211}
Equation \eqref{1} with $k=5,d>1$ and $(a_0,a_1,\ldots,a_4)=(2,5,2,-1,-1)$ implies that $n=-4,d=3.$
\end{lem}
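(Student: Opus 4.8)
The plan is to follow the strategy of Lemma~\ref{35211}. First I would use \eqref{2}, which here gives $n=2x_0^2$ and $n+3d=-x_3^2$, to express $d$ and the remaining terms $n+d,\,n+2d,\,n+4d$ in terms of $x_0$ and $x_3$; clearing denominators produces the system of conics
\begin{eqnarray*}
&&4x_0^2-x_3^2=15x_1^2,\\
&&x_0^2-x_3^2=3x_2^2,\\
&&2x_0^2+4x_3^2=3x_4^2.
\end{eqnarray*}
A short divisibility argument using $\gcd(n,d)=1$ shows $\gcd(x_0,x_3)=1$ (a common prime divisor would have to be $3$, which the second and third equations exclude), and the third equation forces $x_4$ to be even.

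Next comes the descent. The first conic factors over $\mathbb{Q}$, giving $2x_0+x_3=\delta_1\square$ with $\delta_1$ ranging over a finite set of squarefree divisors of $15$; the second conic is kept whole; and after writing $x_4=2w$ the third conic becomes $x_0^2+2x_3^2=6w^2$, whose factorisation in the principal ideal domain $\mathbb{Z}[\sqrt{-2}]$ (with unit group $\{\pm1\}$) yields $x_0+\sqrt{-2}\,x_3=\delta_2\square$ with $\delta_2$ in a small finite set. Putting $X=x_3/x_0$ and multiplying these relations together reduces the problem to determining, for each $\delta$ in a finite set, all points $(X,Y)$ with $X\in\mathbb{Q}$ and $Y\in\mathbb{Q}(\sqrt{-2})$ on a curve of the form
$$
C_\delta:\quad \delta(X+2)(X^2-1)(\sqrt{-2}\,X+1)=Y^2
$$
over $\mathbb{Q}(\sqrt{-2})$, in complete analogy with the curves in the proof of Lemma~\ref{35211}. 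As there, the isomorphism $(X,Y)\mapsto(X,\overline{Y})$ between $C_\delta$ and $C_{\overline\delta}$, together with sign and $2$-adic restrictions on the admissible $\delta$, cuts down the number of curves to be examined.

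Then I would pass each $C_\delta$ to MAGMA. Every such curve is isomorphic over $\mathbb{Q}(\sqrt{-2})$ to an elliptic curve $E_\delta$; for those $\delta$ with $\operatorname{rank}E_\delta(\mathbb{Q}(\sqrt{-2}))=0$ the finitely many points, hence the finitely many possible values of $X$, are obtained at once, while for those of rank $1$ --- which is admissible since $[\mathbb{Q}(\sqrt{-2}):\mathbb{Q}]=2$ --- I would apply Bruin's elliptic Chabauty routine at a suitable auxiliary prime $p$ to pin down exactly the points with $X\in\mathbb{Q}$. Collecting the finitely many such $X$, substituting back into the system, and retaining only those yielding $\gcd(n,d)=1$ and a genuine arithmetic progression should leave only $-4,-1,2,5,8$, that is $(n,d)=(-4,3)$.

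The main obstacle is this last step: elliptic Chabauty applies only when $\operatorname{rank}E_\delta(\mathbb{Q}(\sqrt{-2}))\le 1$, so should some $C_\delta$ produce an elliptic curve of rank $\ge 2$ over $\mathbb{Q}(\sqrt{-2})$ the argument as arranged would fail. One would then fall back on a different descent --- keeping another conic whole, or working over $\mathbb{Q}(\sqrt{2})$ or $\mathbb{Q}(\sqrt{6})$, the other quadratic fields naturally attached to these conics --- or combine all three conics into one genus-$2$ curve and use Chabauty together with the Mordell--Weil sieve, or impose further congruence conditions to eliminate the offending square classes. Verifying that the list of representatives $\delta$ is complete, and choosing Chabauty primes for which the computations terminate, is the remaining routine point.
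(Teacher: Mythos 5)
Your setup is sound and your algebra checks out: with $n=2x_0^2$, $n+3d=-x_3^2$ the three conics $4x_0^2-x_3^2=15x_1^2$, $x_0^2-x_3^2=3x_2^2$, $2x_0^2+4x_3^2=3x_4^2$ are exactly right, the known solution corresponds to $X=x_3/x_0=1/2$, and combining the split factor of the first conic, the second conic, and the $\mathbb{Z}[\sqrt{-2}]$-factorisation of the third into a quartic $C_\delta$ over $\mathbb{Q}(\sqrt{-2})$ is a legitimate descent (any solution lands on some $C_\delta$ with $X$ rational, so an upper bound on such $X$ suffices). The paper does the same kind of thing but with a different parametrisation: it takes $x_2,x_3$ as the base variables (so the auxiliary quadratic field is attached to $x_2^2+x_3^2$, resp.\ $3x_2^2+x_3^2$, rather than your $x_0^2+2x_3^2$) and reports that a single elliptic Chabauty computation at $p=13$ gives $x_3/x_2=\pm1$, whence $(n,d)=(-4,3)$; your route would presumably need the same machinery over $\mathbb{Q}(\sqrt{-2})$ and there is no a priori reason to prefer one choice over the other. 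The one real shortfall is the one you flag yourself: the decisive content of the lemma is the computation — the enumeration of the admissible classes $\delta$, the rank bounds $\le 1$, and the Chabauty step returning only the $X$ giving $-4,-1,2,5,8$ — and in your write-up this is only asserted (``should leave only\ldots''), with an explicit escape clause if some $E_\delta$ has rank $\ge 2$; until those MAGMA computations are actually carried out (as the paper did for its model), the lemma is not yet proved. Two small points to tidy: the exclusion of a common factor $3$ of $x_0,x_3$ follows directly from $\gcd(n,d)=1$ (a common prime would force $3\mid n$ and $3\mid d$), not from the second and third conics; and with $x_i>0$ and the tuple as written the progression is recovered in reverse order, i.e.\ as the mirror image $-4,-1,2,5,8$, which is consistent with the paper's convention but worth stating when you substitute back.
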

\begin{proof}
We use $x_3$ and $x_2$ to get a system of equations as in the previous lemmas. Elliptic Chabauty's method applied with $p=13$ yields that $x_3/x_2=\pm 1,$ hence $(n,d)=(-4,3).$
\end{proof}

\begin{lem}\label{65132}
Equation \eqref{1} with $k=5,d>1$ and $(a_0,a_1,\ldots,a_4)=(6,5,1,3,2)$ has no solutions.
\end{lem}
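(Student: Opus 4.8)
The plan is to argue as in Lemmas \ref{15672110}--\ref{3156721}: set up a system of ternary quadratic equations and then apply elliptic Chabauty's method. From \eqref{2} with $(a_0,\dots,a_4)=(6,5,1,3,2)$ I would take $n=6x_0^2$ and $n+4d=2x_4^2$, so that $d=(x_4^2-3x_0^2)/2$, and eliminate $n,d$ from the three remaining relations to obtain
\begin{eqnarray*}
&& x_4^2+9x_0^2=10x_1^2,\\
&& x_4^2+3x_0^2=x_2^2,\\
&& x_4^2+x_0^2=2x_3^2.
\end{eqnarray*}
Since $6\gcd(x_0,x_4)^2\mid n$ and $2\gcd(x_0,x_4)^2\mid n+4d$, one gets $2\gcd(x_0,x_4)^2\mid\gcd(n,4d)\mid 4$, so $\gcd(x_0,x_4)=1$; as $d\in\mathbb{Z}$ forces $x_0\equiv x_4\pmod 2$, both $x_0$ and $x_4$ are odd; and since $3\mid n$ while $3\nmid d$, we have $3\nmid n+4d=2x_4^2$, i.e.\ $3\nmid x_4$.

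Next I would factor the first and third equations in $\mathbb{Z}[i]$, as in the proof of Lemma \ref{15672110}. Because $x_0,x_4$ are odd and coprime, $1+i$ divides $x_4+3ix_0$ and $x_4+ix_0$ exactly once, and because $3$ is inert in $\mathbb{Z}[i]$ and $3\nmid x_4$ it divides neither factor; a standard factorisation argument then yields $(x_4+3ix_0)(x_4+ix_0)=\delta\,\square$ with $\delta$ running over the finitely many elements of $\mathbb{Z}[i]$ of norm $5$ (the associates of $2\pm i$). Combining this with $X^2+3=(x_2/x_0)^2$, where $X=x_4/x_0$, reduces the problem to finding all points with $X\in\mathbb{Q}$ and $Y\in\mathbb{Q}(i)$ on the curves
\begin{equation*}
C_\delta:\quad \delta(X+i)(X+3i)(X^2+3)=Y^2.
\end{equation*}
The map $(X,Y)\mapsto(X,iY)$ identifies $C_\delta$ with $C_{-\delta}$, and $(X,Y)\mapsto(-X,\bar Y)$ identifies $C_\delta$ with $C_{\bar\delta}$, so that, as in the earlier lemmas, only a couple of these curves need be treated; and since $(-i,0)\in C_\delta(\mathbb{Q}(i))$, each $C_\delta$ is a genuine elliptic curve $E_\delta$ over $\mathbb{Q}(i)$.

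Finally, for each relevant $\delta$ I would compute the rank of $E_\delta(\mathbb{Q}(i))$ in MAGMA. In the rank-$0$ cases the group is finite and one reads off that no point gives $X=x_4/x_0$ with $d>1$; in the rank-$1$ cases one applies elliptic Chabauty with a suitable prime $p$ of good reduction (allowing, as in Lemma \ref{35211}, a small set of primes should a single one not suffice). This produces the complete finite list of candidate values of $X$, and each is checked against the displayed system and against $d=(x_4^2-3x_0^2)/2>1$: I expect every candidate to force either $d\le 0$ or one of $x_4^2+3x_0^2$, $x_4^2+9x_0^2$ to be a non-square, so that \eqref{1} has no solution for this tuple. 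The main obstacle is the familiar one for this method: some twist $E_\delta$ may turn out to have rank $\ge 2$, in which case the reduction must instead start from a different pair of indices (for instance $x_0$ and $x_2$, leading to a curve over $\mathbb{Q}(\sqrt{-6})$ or $\mathbb{Q}(\sqrt2)$) or be supplemented by a further descent step, and in any case one must exhibit primes $p$ for which the elliptic Chabauty computation actually terminates with the required conclusion.
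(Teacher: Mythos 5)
Your reduction is set up correctly and follows essentially the same strategy as the paper: pass to a system of ternary quadratics, factor over $\mathbb{Z}[i]$, and reduce to finding the rational-$X$ points on finitely many quartic twists over $\mathbb{Q}(i)$ via elliptic Chabauty. The only real difference is the choice of the parametrizing pair: you use $(x_0,x_4)$, arriving at $\delta(X+i)(X+3i)(X^2+3)=Y^2$ with $\delta$ an associate of $2\pm i$, whereas the paper uses $(x_0,x_3)$ and works with $\delta(x_3+\sqrt{-1}x_0)(x_3+2\sqrt{-1}x_0)(2x_3^2-x_0^2)=\square$, $\delta\in\{1\pm 3\sqrt{-1},3\pm\sqrt{-1}\}$. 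Your three derived equations, the conclusion $\gcd(x_0,x_4)=1$ with $x_0,x_4$ odd, the fact $3\nmid x_4$, and the determination of the possible $\delta$ all check out, so the reduction itself is a legitimate variant.

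The genuine gap is that the lemma \emph{is} the computation you do not perform. You never exhibit the ranks of the twists $E_\delta(\mathbb{Q}(i))$, a prime $p$ at which elliptic Chabauty succeeds, or the resulting finite list of values of $X$; you only say you ``expect'' the candidates to be eliminated, and you yourself concede the method may fail (rank $\geq 2$, or no suitable $p$), offering only an unspecified fallback. As written, nothing is proved. Be aware also of what the computation must return: the system does have a solution coming from the progression $6,5,4,3,2$ (so $d=-1$, i.e.\ $X=x_4/x_0=\pm1$ in your coordinates), so the Chabauty step cannot give an empty set; the surviving values have to be discarded using the hypothesis $d>1$, exactly as in the paper, where the computation yields $x_3/x_0=\pm1$, corresponding to $d=\pm1$. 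Your final sentence does allow for this (``$d\le 0$''), so the logic of the elimination is right, but until the rank and Chabauty data are actually produced the argument is a plan rather than a proof of the lemma.
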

\begin{proof}
In this case we have
$$
\delta(x_3+\sqrt{-1}x_0)(x_3+2\sqrt{-1}x_0)(2x_3^2-x_0^2)=\square,
$$
where $\delta\in\{1\pm 3\sqrt{-1},3\pm\sqrt{-1}\}.$ Chabauty's argument gives $x_3/x_0=\pm 1,$ which corresponds to arithmetic progressions with $d=\pm 1.$
\end{proof}

\section{remaining cases of Theorem A}
In this section we prove Theorem \ref{k7}. 
\begin{proof}
First note that Lemmas \ref{15672110}, \ref{2315672} and \ref{3156721} imply the statement of the theorem in cases of $k=7,13$ and 19. The two remaining possibilities can be eliminated in a similar way, we present the argument working for the tuple
$$
(5, 6, 7, 2, 1, 10, 11, 3, 13, 14, 15, 1, 17, 2, 19, 5, 21, 22, 23, 6, 1, 26, 3).
$$
We have the system of equations
\begin{eqnarray*}
&&n+d=6x_1^2,\\
&&n+3d=2x_3^2,\\
&&n+5d=10x_5^2,\\
&&n+7d=3x_7^2,\\
&&n+9d=14x_9^2,\\
&&n+11d=x_{11}^2,\\
&&n+13d=2x_{13}^2.
\end{eqnarray*}
We find that $x_7,x_{11}$ and $(n+d)$ are even integers. Dividing all equations by 2 we obtain an arithmetic progression of length 7 and $(a_0,a_1,\ldots,a_6)=(3,1,5,6,7,2,1).$ This is not possible by Lemma \ref{3156721} and the theorem is proved. 
\end{proof}

\section{the case $k=5$}
In this section we prove Theorem \ref{k5}.
\begin{proof}
Five divides one of the terms and by symmetry we may assume that $5\mid n+d$ or $5\mid n+2d.$ First we compute the set of possible tuples $(a_0,a_1,a_2,a_3,a_4)$ for which appropriate congruence conditions hold ($\gcd(a_i,a_j)\in\{1,P(j-i)\}$ for $0\leq i<j\leq 4$) and the number of sign changes are at most 1 and the product $a_0a_1a_2a_3a_4$ is positive. After that we eliminate tuples by using elliptic curves of rank 0. We consider elliptic curves $(n+\alpha_1d)(n+\alpha_2d)(n+\alpha_3d)(n+\alpha_4d)=\prod_ia_{\alpha_i}\square,$ where $\alpha_i,i\in\{1,2,3,4\}$ are distinct integers belonging to the set $\{0,1,2,3,4\}.$ If the rank is 0, then we obtain all possible values of $n/d.$ Since $\gcd(n,d)=1$ we get all possible values of $n$ and $d.$ It turns out that it remains to deal with the following tuples
\begin{eqnarray*}
&&(-3,-5,2,1,1),\\
&&(-2,-5,3,1,1),\\
&&(-1,-15,-1,-2,3),\\
&&(2,5,2,-1,-1),\\
&&(6,5,1,3,2).
\end{eqnarray*}
In case of $(-3,-5,2,1,1)$ Lemma \ref{35211} implies that $(n,d)=(-12,7).$

If $(a_0,a_1,\ldots,a_4)=(-2,-5,3,1,1),$ then by $\gcd(n,d)=1$ we have that $\gcd(n,3)=1.$ Since $n=-2x_0^2$ we obtain that $n\equiv 1\pmod{3}.$ From the equation $n+2d=3x_2^2$ we get that $d\equiv 1\pmod{3}.$ Finally, the equation $n+4d=x_4^2$ leads to a contradiction.

If $(a_0,a_1,\ldots,a_4)=(-1,-15,-1,-2,3),$ then we obtain that $\gcd(n,3)=1.$ From the equations $n=-x_0^2$ and $n+d=-15x_1^2$ we get that $n\equiv 2\pmod{3}$ and $d\equiv 1\pmod{3}.$ Now the contradiction follows from the equation $n+2d=-x_2^2.$

In case of the tuple $(2,5,2,-1,-1)$ Lemma \ref{25211} implies that $(n,d)=(-4,3).$ The last tuple is eliminated by Lemma \ref{65132}.
\end{proof}

\bibliography{all,BruinN}

\begin{thebibliography}{1}

\bibitem{BBGyH}
M.~A. Bennett, N.~Bruin, K.~Gy{\H{o}}ry, and L.~Hajdu.
\newblock Powers from products of consecutive terms in arithmetic progression.
\newblock {\em Proc. London Math. Soc. (3)}, 92(2):273--306, 2006.

\bibitem{MAGMA}
W.~Bosma, J.~Cannon, and C.~Playoust.
\newblock The {M}agma algebra system. {I}. {T}he user language.
\newblock {\em J. Symbolic Comput.}, 24(3-4):235--265, 1997.
\newblock Computational algebra and number theory (London, 1993).

\bibitem{NB1}
N.~R. Bruin.
\newblock {\em Chabauty methods and covering techniques applied to generalized
  {F}ermat equations}, volume 133 of {\em CWI Tract}.
\newblock Stichting Mathematisch Centrum Centrum voor Wiskunde en Informatica,
  Amsterdam, 2002.
\newblock Dissertation, University of Leiden, Leiden, 1999.

\bibitem{NB2}
Nils Bruin.
\newblock Chabauty methods using elliptic curves.
\newblock {\em J. Reine Angew. Math.}, 562:27--49, 2003.

\bibitem{Dickson}
L.E. Dickson.
\newblock {\em History of the theory of numbers. {V}ol II: {D}iophantine
  analysis}.
\newblock Chelsea Publishing Co., New York, 1966.

\bibitem{ES}
P.~Erd{\H{o}}s and J.~L. Selfridge.
\newblock The product of consecutive integers is never a power.
\newblock {\em Illinois J. Math.}, 19:292--301, 1975.

\bibitem{HLST}
N.~Hirata-Kohno, S.~Laishram, T.N. Shorey, and R.~Tijdeman.
\newblock An {E}xtension of a {T}heorem of {E}uler.
\newblock {\em preprint}.

\bibitem{Oblath}
Richard Obl{\'a}th.
\newblock \"{U}ber das {P}rodukt f\"unf aufeinander folgender {Z}ahlen in einer
  arithmetischen {R}eihe.
\newblock {\em Publ. Math. Debrecen}, 1:222--226, 1950.

\end{thebibliography}

\end{document}